\def\BibTeX{{\rm B\kern-.05em{\sc i\kern-.025em b}\kern-.08em
    T\kern-.1667em\lower.7ex\hbox{E}\kern-.125emX}}
\def\J{{\bf 1}}
\DeclareMathOperator{\Col}{Col}
\DeclareMathOperator{\Row}{Row}
\DeclareMathOperator{\lcm}{lcm}
\def\cal{\mathcal}
\def\diag{diag}
\def\ra{\rightarrow}
\def\lra{\leftrightarrow}
\def\d{\delta}
\def\D{\Delta}
\def\0{{\bf 0}}
\newtheorem{thm}{Theorem}[section]
\newtheorem{dfn}[thm]{Definition}
\newtheorem{prp}[thm]{Proposition}
\newtheorem{exa}[thm]{Example}
\newtheorem{rem}[thm]{Remark}
\newtheorem{alg}[thm]{Algorithm}
\begin{document}
\title{State Feedback Stabilization of Generic Logic Systems via Ledley Antecedence Solution}
\author{Yingzhe Jia, \IEEEmembership{Student Member, IEEE}, Daizhan Cheng, \IEEEmembership{Fellow, IEEE} and Jun-e Feng
\thanks{This work is supported partly by the National Natural Science Foundation of China (NSFC) under Grants 61773371, 61733018 and 61877036.}
\thanks{Yingzhe Jia and Jun-e Feng are with School of Mathematics, Shandong University, Jinan 250100, P. R. China (e-mail for Feng: fengjune@sdu.edu.cn, e-mail for Jia: yingzhe.jia@postgrad.manchester.ac.uk). }
\thanks{Daizhan Cheng is with the Key Laboratory of Systems and Control, Academy of Mathematics and Systems Sciences, Chinese Academy of Sciences, Beijing 100190, P. R. China (e-mail dcheng@iss.ac.cn).}
\thanks{Corresponding author: Jun-e Feng. Tel.: +86 531 88364652.}}

\maketitle

\begin{abstract}
In this paper, the application of Ledley antecedence solutions in designing state feedback stabilizers of generic logic systems has been proposed. To make the method feasible, two modifications are made to the original Ledley antecedence solution theory: (i) the preassigned logical functions have been extended from being a set of equations to an admissible set; (ii) the domain of arguments has been extended from the whole state space to a restricted subset. In the proposed method, state feedback controls are considered as a set of extended Ledley antecedence solutions for a designed iterative admissible sets over their corresponding restricted subsets. Based on this, an algorithm has been proposed to verify the solvability, and simultaneously to provide all possible state feedback stabilizers when the problem is solvable. All stabilizers are optimal, which stabilize the logic systems from any initial state to the destination state/state set in the shortest time. The method is firstly demonstrated on Boolean control networks to achieve point stabilization. Then, with some minor modifications, the proposed method is also proven to be applicable to set stabilization problems. Finally, it is shown that in $k$-valued and mix-valued logical systems, the proposed method remains effective.
\end{abstract}

\begin{IEEEkeywords}
Boolean control network, Stabilization, Ledley antecedence/consequence solution, State feedback stabilizer, Semi-tensor product of matrices.
\end{IEEEkeywords}

\section{Introduction}
\label{sec:introduction}
\IEEEPARstart{T}{he} problem of stability and stabilization is one of the most important issues  in studying dynamic (control) systems. It is also a fundamental topic for Boolean (control) networks as well as for $k$-valued or  mix-valued (control) networks.  For statement ease, in the following text  a logical network could be a Boolean, $k$-valued, or  mix-valued network. The systematic investigation of stability of logical networks may be traced back to F. Robert \cite{rob86}, where it was called the convergence of discrete iteration. The vector metric was introduced and used to provide some sufficient conditions for convergence of discrete iterations.

Boolean network (BN) was introduced firstly by Kauffman \cite{kau69,kau93} to formulate cellular networks. To manipulate BN, the Boolean control network (BCN) was merged naturally \cite{hua00}. Since then, the stability and stabilization become an interesting and challenging topic to study. Later on, Cheng and his colleagues proposed a new matrix product called semi-tensor product (STP) of matrices, and used STP to convert a logical network into an algebraic form called the algebraic state space representation (ASSR) of logical networks \cite{che11,che12}. Motivated by ASSR, the investigation of Boolean (control) networks has been developed quickly. Many fundamental control problems about logical networks have been studied, for instance, controllability and observability\cite{che09,for13b}, state space decomposition and disturbance decoupling \cite{che11c,fu18}, optimization and optimal control \cite{zha11,for14}, with some applications to finite automata \cite{xu12,yan15}, coding \cite{zho19}, just to mention a few.  A general outline on recent development of logical networks via STP/ASSR and its applications can be discovered by some survey papers \cite{for16,lu17,muh16,li18,zhao20}.

The stability of logical networks have also been studied via ASSR widely. Three basic approaches have been developed: (i) incidence-matrix-based stability analysis \cite{che11b}; (ii) transition-matrix-based stability analysis \cite{che11,for13}; (iii) Lyapunov function-based analysis \cite{li17}. Of course, stability is closely related to the stabilization of logical control networks, because stabilization is to find a suitable controller, which is commonly called the stabilizer, to make the controlled system being stable.

Various stabilization problems have also been investigated via STP and ASSR. For instance,  set stability and stabilization were firstly proposed and investigated by \cite{guo15};  stability and stabilization of BNC with delay have been discussed by several authors, say \cite{men19};
state feedback stabilization with design technique was proposed by \cite{li17b}; partial stability and stabilization were investigated by \cite{che16}; robust stability and stabilization of BCN with disturbances were considered in \cite{zho17}.

This paper investigates the state feedback stabilization of generic logic systems. The technique proposed in this paper is based on Ledley antecedence solution.
 The antecedence/consequence solution for a set of logical equations was firstly proposed by R.S. Ledley \cite{led55,led73}. It has been applied to several logical problems such as error diagnosis of disease, digital computational method in symbolic logic, digital circuit design, etc. We also refer to \cite{kim82} for a systematic description.  It was firstly applied to design of BCN by  \cite{qia18}. Using STP, some useful formulas were developed by \cite{qia18} to calculate anticedence/consequence solutions. The method of Ledley antecedence/consequence solution has also been used to design control of singular Boolean networks \cite{qipr}.

It is interesting that Ledley antecedence solution may provide a suitable state feedback stabilizer in a very natural and convenient way. To this end, two generalizations of Ledley antecedence solution have been done: (i) Ledley antecedence solution has been originally used with respect to a set of equalities, in which the preassigned functions equal to a set of constants respectively. In this paper, it has been developed such that the constants can be converted to an admissible set. (ii) Originally,  the domain of arguments for Ledley antecedence solution is over the whole state space. In this paper it has been developed into a restricted set.

The main advantages of this new approach are: (i) The algorithm is necessary and sufficient. It can verify whether the problem is solvable, and simultaneously it can also produce a state feedback stabilizer provided the problem is solvable. (ii) The stabilizer provides an optimal solution, which means each point can reach its destination point/set in the shortest time. (iii) All the feasible state feedback stabilizers can be found with one search. (iv) With a mild revision, the method is applicable to set stabilization. In addition, the computations involved are easy and straightforward.

Before ending this section, we give a brief list for notations:

\begin{enumerate}


\item  ${\cal M}_{m\times n}$: the set of $m\times n$ real matrices.

\item $\Col(M)$ ($\Row(M)$): the set of columns (rows) of $M$. $\Col_i(M)$ ($\Row_i(M)$): the $i$-th column (row) of $M$.

\item ${\cal D}:=\{0,1\}$.

\item ${\cal D}_k:=\left\{0,\frac{1}{k-1},\cdots,\frac{k-2}{k-1},1\right\}$, $k\geq 2$.

\item $\d_n^i$: the $i$-th column of the identity matrix $I_n$.

\item $\D_n:=\left\{\d_n^i\vert i=1,\cdots,n\right\}$; $\D :=\D_2$.

\item ${\bf 0}_{\ell}:=(\underbrace{0,0,\cdots,0}_{\ell})^{\mathrm{T}}$.

\item A matrix $L\in {\cal M}_{m\times n}$ is called a logical matrix
if $\Col(L)\subset \D_m$.
Denote by ${\cal L}_{m\times n}$ the set of $m\times n$ logical
matrices.

\item If $L\in {\cal L}_{n\times r}$, by definition it can be expressed as
$L=[\d_n^{i_1},\d_n^{i_2},\cdots,\d_n^{i_r}]$. For the sake of
compactness, it is briefly denoted as $
L=\d_n[i_1,i_2,\cdots,i_r]$.

\item $\d_n\{a_1,a_2,\cdots, a_n\}:=\{\d_n^{a_1},\d_n^{a_2},\cdots,\d_n^{a_n}\}$.

\item $\ltimes$: Semi-tensor product of matrices.
\item Let $A\in {\cal M}_{p\times n}$ and $B\in {\cal M}_{q\times n}$. Then $A*B\in {\cal M}_{pq\times n}$ is the Khatri-Rao product of matrix, satisfying \cite{che11}
$$
\Col_j(A*B)=\Col_j(A)\ltimes \Col_j(B),\quad j=1,\cdots,n.
$$
\item Denote by ${\cal B}_{m\times n}$ the set of $m\times n$ Boolean matrices.

\item $A,B\in {\cal M}_{m\times n}$, $A\geq B$ means that  $A_{i,j}\geq B_{i,j}$, $\forall i,j$.

\item $PR_k$: power reducing matrix, which is defined as
$$
PR_k=\diag(\d_k^1,\d_k^2,\cdots,\d_k^k)\in {\cal M}_{k^2\times k}.
$$

\end{enumerate}

The rest of this paper is organized as follows: Section II provides some necessary preliminaries, including two parts: (i) STP of matrices and the matrix express of logical functions; (ii) Ledley antecedence/consequnce solution to a set of logical functions. Section III considers the state feedback stabilization of BCN to a pre-assigned point. In Section III the Ledley antecedence/consequence solution has been generalized in two ways: (i) the set of equalities is generalized to a set of inclusions; (ii) the antecedence/consequence solution has been generalized to subset antecedence/consequence. Section IV uses the generalized antecedence to construct state feedback stabilizer. Section V discusses the stabilization to a pre-assigned subset. The algorithm for point stabilization has been extended to a similar algorithm for set stabilization. Section VI provides an example to show that the technique developed is also applicable to $k$-valued or mix-valued logical networks. In section VII, some concluding remarks of the paper are given.

\section{Preliminaries}

\subsection{Matrix Expression of Logical Functions}
We first recall STP, which is the fundamental tool for algebraic expression of logical functions.

\begin{dfn}\label{d2.1.1} \cite{che11} Let $A\in {\cal M}_{m\times n}$,  $B\in {\cal M}_{p\times q}$, and the least common multiple $\lcm(n,p)=t$.
Then the SPT of $A$ and $B$ is defined by
\begin{align}\label{2.1.1}
A\ltimes B:=\left(A\otimes I_{t/n}\right)\left(B\otimes I_{t/p}\right),
\end{align}
where $\otimes$ is Kronecker product.
\end{dfn}

Note that STP is a generalization of classical matrix product. That is, when $n=p$, STP degenerates to classical matrix product. Throughout this paper the default matrix product is STP, and in most cases the symbol $\ltimes$ is omitted.

$X$ is called a logical variable if $X\in {\cal D}$. The vector expression of $X$, denoted by $x=\vec{X}$, is
$$
x=\vec{X}:=\begin{bmatrix}X\\ 1-X\end{bmatrix}\in \D.
$$

Note that as a convention, we always use capital letters, such as $X_i$, $U_j$, for logical variables, and use their lower case, such as $x_i$, $u_j$, for their vector forms.

\begin{dfn}\label{d2.1.2} A mapping $\phi:{\cal D}\ra {\cal D}$ is called a unary operator.  A mapping $\phi:{\cal D}^2\ra {\cal D}$ is called a binary operator.  A mapping $\phi:{\cal D}^n\ra {\cal D}$ is called a (classical) logical function of $n$-variables. A classical logical function is also called a Boolean function.
\end{dfn}

So the unary operators are one-variable Boolean functions, and the binary operators are two-variable Boolean functions.
In vector form, for each operator there is a logical matrix such that the logical expression can be converted into an algebraic expression, called the ASSR expression.

Consider unary operator negation (~$\neg$)
\begin{align}\label{2.1.2}
\neg x=\begin{bmatrix}0&1\\1&0\end{bmatrix}x=\d_2[2,1]x:=M_nx,
\end{align}
where ~$M_n\in {\cal L}_{2\times 2}$ is called the structure matrix of negation.

Similarly, for a binary operator $\sigma$, we can also find its structure matrix $M_{\sigma}\in {\cal L}_{2\times 4}$, such that
$$
x\sigma y=M_{\sigma}xy.
$$
For instance,
\begin{itemize}
\item conjunction (~$\wedge$):
\begin{align}\label{2.1.3}
M_c=\d_2[1,2,2,2].
\end{align}
\item  disjunction (~$\vee$):
\begin{align}\label{2.1.4}
M_d=\d_2[1,1,1,2].
\end{align}
\item conditional (~$\ra$):
\begin{align}\label{2.1.5}
M_i=\d_2[1,2,1,1].
\end{align}
\item biconditional (~$\lra$):
\begin{align}\label{2.1.6}
M_e=\d_2[1,2,2,1].
\end{align}
\item exclusive or (~$\bar{\vee}$):
\begin{align}\label{2.1.7}
M_p=\d_2[2,1,1,2].
\end{align}
\end{itemize}

\begin{prp}\label{p2.1.3} \cite{che11} Let $F:{\cal D}^n\ra {\cal D}$ be a Boolean function. Then there exists a unique matrix, $M_{F}\in {\cal L}_{2\times 2^n}$ such that in vector form we have
\begin{align}\label{2.1.8}
f(x_1,\cdots,x_n)=M_F\ltimes_{i=1}^nx_i,
\end{align}
where $f$ is the vector form of $F$.
\end{prp}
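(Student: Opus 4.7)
The plan is to prove existence by a direct column-by-column construction of $M_F$, and then uniqueness by noting that the specification pins down every column. The key observation is that when each $x_i \in \Delta$, the semi-tensor product $\ltimes_{i=1}^n x_i$ coincides with the ordinary Kronecker product (since consecutive factor dimensions already match), so the result lies in $\Delta_{2^n}$.

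First I would make precise the bijection between argument tuples and basis vectors of $\R^{2^n}$. For $X_i \in \{0,1\}$ with vector form $x_i = \vec{X_i} \in \Delta$, an induction on $n$ using $\d_2^a \otimes \d_2^b = \d_4^{2(a-1)+b}$ shows that
\begin{align*}
\ltimes_{i=1}^n x_i \;=\; \d_{2^n}^{\,j(X_1,\ldots,X_n)},
\end{align*}
where $j(X_1,\ldots,X_n) = 1 + \sum_{i=1}^{n} (1-X_i)\, 2^{\,n-i}$. This map $(X_1,\ldots,X_n)\mapsto j$ is a bijection from $\{0,1\}^n$ to $\{1,2,\ldots,2^n\}$, so as the arguments range over $\D^n$, the product $\ltimes_{i=1}^n x_i$ ranges bijectively over $\Delta_{2^n}$.

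Next I would construct $M_F$. Define the columns of a candidate matrix $M_F \in {\cal M}_{2\times 2^n}$ by
\begin{align*}
\Col_{j(X_1,\ldots,X_n)}(M_F) \;:=\; \vec{F(X_1,\ldots,X_n)} \;\in\; \Delta.
\end{align*}
Since each column lies in $\Delta$, we have $M_F \in {\cal L}_{2\times 2^n}$ by the definition given in the notation list. To verify the identity, evaluate the right-hand side at an arbitrary tuple: using the bijection above,
\begin{align*}
M_F \ltimes_{i=1}^n x_i \;=\; M_F\, \d_{2^n}^{\,j(X_1,\ldots,X_n)} \;=\; \Col_{j(X_1,\ldots,X_n)}(M_F) \;=\; \vec{F(X_1,\ldots,X_n)} \;=\; f(x_1,\ldots,x_n),
\end{align*}
which establishes existence.

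For uniqueness, suppose $M$ and $M'$ in ${\cal L}_{2\times 2^n}$ both satisfy \eqref{2.1.8}. Then for every $j\in\{1,\ldots,2^n\}$, picking the (unique) tuple with $\ltimes_{i=1}^n x_i = \d_{2^n}^{\,j}$ forces $\Col_j(M) = M\d_{2^n}^{\,j} = M'\d_{2^n}^{\,j} = \Col_j(M')$, so $M = M'$. The only mildly delicate step is the bijectivity/ordering claim for $\ltimes_{i=1}^n x_i$; everything else is bookkeeping once that enumeration is in place.
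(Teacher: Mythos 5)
Your proof is correct, and since the paper states Proposition~\ref{p2.1.3} without proof (it is cited from \cite{che11}), the only thing to compare against is the standard argument, which is exactly your column-by-column construction of $M_F$ via the bijection $(X_1,\dots,X_n)\mapsto \ltimes_{i=1}^n x_i\in\D_{2^n}$ together with evaluation on basis vectors for uniqueness. The one imprecision is the parenthetical claim that $\ltimes_{i=1}^n x_i$ reduces to the Kronecker product ``since consecutive factor dimensions already match'' --- for column vectors the classical product is not defined at all; the correct reason is that for $a\in{\cal M}_{m\times 1}$ and $b\in{\cal M}_{p\times 1}$ the definition gives $a\ltimes b=(a\otimes I_p)(b\otimes I_1)=(a\otimes I_p)b=a\otimes b$ by the mixed-product rule, after which your enumeration, existence, and uniqueness steps go through verbatim.
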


\begin{prp}\label{p2.1.4} Let $x=\d_{k}^s$. Then
\begin{align}\label{2.1.9}
x^2=PR_kx,\quad x\in \D_k.
\end{align}
\end{prp}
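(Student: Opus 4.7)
The plan is to verify the identity by expanding both sides directly in terms of the canonical basis vector $\delta_k^s$ and showing they agree column by column. Since $x \in \Delta_k$ takes only $k$ values, it suffices to check the identity for $x = \delta_k^s$ for each $s \in \{1,\dots,k\}$.

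First, I would compute the left-hand side $x \ltimes x$ using Definition~\ref{d2.1.1}. Here $x \in \mathcal{M}_{k \times 1}$ and $x \in \mathcal{M}_{k \times 1}$, so with $n=1$, $p=k$ we get $\lcm(n,p)=k$, hence
$$
x \ltimes x = (x \otimes I_k)(x \otimes I_1) = (x \otimes I_k)\,x.
$$
Substituting $x = \delta_k^s$, the matrix $\delta_k^s \otimes I_k \in \mathcal{M}_{k^2 \times k}$ is the block matrix whose $s$-th block of $k$ rows equals $I_k$ and whose other blocks vanish. Multiplying on the right by $\delta_k^s$ then extracts the $s$-th column, which is $\delta_k^s \otimes \delta_k^s$; equivalently this is $\delta_{k^2}^{(s-1)k+s}$.

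Next, I would compute the right-hand side. By the definition $PR_k = \diag(\delta_k^1,\delta_k^2,\dots,\delta_k^k) \in \mathcal{M}_{k^2 \times k}$, the $s$-th column of $PR_k$ is the vector in $\R^{k^2}$ which equals $\delta_k^s$ in rows $(s-1)k+1,\dots,sk$ and vanishes elsewhere. Hence $PR_k \delta_k^s = \delta_{k^2}^{(s-1)k+s}$.

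Comparing the two expressions yields $x \ltimes x = PR_k x$, which establishes the claim. There is no real obstacle here; the only thing to be careful about is the indexing convention for $\delta_k^s \otimes \delta_k^s$ inside $\R^{k^2}$, and matching it consistently with the block structure of $PR_k$. Both computations produce the same singleton nonzero entry at position $(s-1)k+s$, so the identity follows from pure bookkeeping with no need for induction or additional machinery.
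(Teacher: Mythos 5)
Your proof is correct. The paper states Proposition~\ref{p2.1.4} without any proof (it is quoted as a standard fact about the power-reducing matrix), and your direct verification --- unwinding Definition~\ref{d2.1.1} to get $x\ltimes x=(\delta_k^s\otimes I_k)\delta_k^s=\delta_{k^2}^{(s-1)k+s}$ and matching this against the $s$-th column of $PR_k$ --- is the standard argument, with the indexing handled correctly, so it is complete.
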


\subsection{Ledley Antecedence/Consequence Solution}

The antecedence and consequence solutions of a given logical function were proposed and investigated firstly by Ledley \cite{led55,led73}. A systematic description with applications can be found in \cite{kim82}. The following definition comes from \cite{qia18}, which is a generalization of the original one given in \cite{kim82}.

\begin{dfn}\label{d2.2.1} Given a set of Boolean equations
\begin{align}\label{2.2.1}
F_i(X_1,\cdots,X_n,U_1,\cdots,U_m)=C_i,\quad i=1,\cdots,s,
\end{align}
 where $C_i\in {\cal D}$ are constants. A set of logical functions
\begin{align}\label{2.2.2}
U_j=G_j(X_1,X_2,\cdots,X_n), \quad j=1,2, \cdots,m
\end{align}
\begin{itemize}
\item[(i)] is called an antecedence solution of (\ref{2.2.1}), if (\ref{2.2.2}) implies (\ref{2.2.1});
\item[(ii)] is called a consequence solution of (\ref{2.2.1}), if (\ref{2.2.1}) implies (\ref{2.2.2}).
\end{itemize}
\end{dfn}

Note that in general for a given set of Boolean equations with $Z=\{Z_1,Z_2,\cdots,Z_q\}$ as their independent variables, an arbitrary partition
$$
Z=Z^1\bigcup Z^2
$$
is considered. The antecedence/consequence solution may have the form as
$$
Z_s=G_s(Z^1),\quad Z_s\in Z^2.
$$
For our purpose, (\ref{2.2.2}) is enough for control design.

Using vector form expression $x_i=\vec{X}_i$, $i=1,2,\cdots,n$, $u_j=\vec{U}_j$, $j=1,2,\cdots,m$, and denoting $x=\ltimes_{i=1}^nx_i$, $u=\ltimes_{j=1}^mu_j$, the algebraic forms of (\ref{2.2.1}) and (\ref{2.2.2}) are as the following (\ref{2.2.3}) and (\ref{2.2.4}) respectively:
\begin{align}\label{2.2.3}
M_Fux=c,
\end{align}
\begin{align}\label{2.2.4}
u=M_Gx.
\end{align}

Based on algebraic form (\ref{2.2.3}), \cite{qia18} proposed a matrix, called truth matrix, to describe the corresponding relation between $x$ and $u$. Roughly speaking, truth matrix shows when $x$ and $u$ are consistent. We use a simple example to describe this.

\begin{exa}\label{e2.2.2} Given
\begin{align}\label{2.2.5}
\begin{array}{l}
F_1(X_1,X_2,U)=(X_1\vee U)\ra X_2=\d_2^1,\\
F_2(X_1,X_2,U)=(X_1\wedge U)=\d_2^2,
\end{array}
\end{align}
it is easy to calculate that
\begin{align}\label{2.2.6}
M_F=\d_4[1,3,2,4,2,4,2,2],
\end{align}
and $c=\d_4^2$.
Next, we construct a matrix, where the columns represent different values of $x$ and rows represent different values of $u$. Then the table shows whether corresponding to each pair $(u,x)$  equation (\ref{2.2.3}) holds. If it is ``true", the corresponding entry is $1$, if it is ``false", the corresponding entry is $0$. Then we have Table \ref{tab.2.2.1}. We also say that the truth matrix is
\begin{align}\label{2.2.7}
T=\begin{bmatrix}
0&0&1&0\\
1&0&1&1
\end{bmatrix}.
\end{align}

\begin{table}[htbp] 

\centering \caption{Truth Matrix \label{tab.2.2.1}}
\doublerulesep 0.5pt
\begin{tabular}{c c c c c}
\hline\hline
$u\backslash x$&$\d_4^1$&$\d_4^2$&$\d_4^3$&$\d_4^4$\\
\hline
$\d_2^1$&0&0&1&0\\
$\d_2^2$&1&0&1&1\\
\hline\hline
\end{tabular}
\end{table}

\end{exa}

The following result is essential for verifying antecedence/consequence solutions.

\begin{thm}\label{t2.2.3} \cite{qia18}
Assume the truth table of equations (\ref{2.2.1}) is $T$, and the set of equations (\ref{2.2.2}) has its algebraic form as (\ref{2.2.4}).
Then
\begin{itemize}
\item[(i)] (\ref{2.2.2}) is an antecedence solution of (\ref{2.2.1}), if and only if,
\begin{align}\label{2.2.8}
M_G\leq T,
\end{align}
\item[(ii)] (\ref{2.2.2}) is a consequence solution of (\ref{2.2.1}), if and only if,
\begin{align}\label{2.2.9}
T\leq M_G.
\end{align}
\end{itemize}
\end{thm}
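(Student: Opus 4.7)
The plan is to unfold both matrix inequalities entrywise and read off exactly what each one says about the logical implication between (\ref{2.2.1}) and (\ref{2.2.2}). Two observations are central. First, since $M_G\in\cal{L}_{2^m\times 2^n}$ is a logical matrix, its $j$-th column contains a unique $1$: writing $\Col_j(M_G)=\delta_{2^m}^{i_j}$, the algebraic form (\ref{2.2.4}) says that when $x=\delta_{2^n}^j$ the function $G$ returns $u=\delta_{2^m}^{i_j}$. Second, by the very construction of the truth matrix illustrated in Example~\ref{e2.2.2}, $T_{i,j}=1$ holds exactly when the pair $(u,x)=(\delta_{2^m}^i,\delta_{2^n}^j)$ satisfies (\ref{2.2.3}), and $T_{i,j}=0$ otherwise.

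For part (i), I would observe that since both $M_G$ and $T$ are $0/1$ matrices, $M_G\leq T$ is equivalent to the pointwise implication ``$(M_G)_{i,j}=1\Rightarrow T_{i,j}=1$''. Combined with the fact that the sole $1$ in the $j$-th column of $M_G$ sits in row $i_j$, this reduces to the condition that $T_{i_j,j}=1$ for every $j\in\{1,\dots,2^n\}$. By the defining property of $T$, this says exactly that for every $x=\delta_{2^n}^j$ the pair $(G(x),x)$ satisfies (\ref{2.2.3}); equivalently, (\ref{2.2.2}) implies (\ref{2.2.1}). Both directions of the equivalence fall out of the same unpacking, so no separate ``if'' and ``only if'' arguments are needed.

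For part (ii), the argument is dual. The inequality $T\leq M_G$ is equivalent to ``$T_{i,j}=1\Rightarrow (M_G)_{i,j}=1$''. Using once more that $(M_G)_{i,j}=1$ only when $i=i_j$, this says precisely that whenever $(\delta_{2^m}^i,\delta_{2^n}^j)$ satisfies (\ref{2.2.3}) one must have $i=i_j$, i.e.\ $G(\delta_{2^n}^j)=\delta_{2^m}^i$. This is exactly the assertion that (\ref{2.2.1}) implies (\ref{2.2.2}).

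The only bookkeeping that needs care concerns the atypical columns of $T$. An all-zero column corresponds to an $x$ for which no $u$ solves (\ref{2.2.1}); it is harmless for antecedence (the implication at that $x$ is vacuous) and it trivially satisfies the consequence inequality on that column. A column with several $1$'s is fine for antecedence (any witness may be chosen as $G(x)$) but severely restricts consequence, since each column of $M_G$ carries only a single $1$; thus a consequence solution exists only when every column of $T$ has at most one $1$. I would flag these edge cases to confirm that the two inequalities correctly capture them, but beyond this sanity check I do not anticipate any real obstacle: the proof is essentially a transparent unpacking of the definitions of $M_G$, $T$, and the symbol ``$\leq$''.
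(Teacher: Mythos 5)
Your main argument is correct, and it is essentially the canonical one: the paper itself states this theorem only as a citation to \cite{qia18} without reproducing a proof, and the result really is the transparent entrywise unpacking you give. For (i), since every column of the logical matrix $M_G$ carries exactly one $1$, in row $i_j$ where $G(\delta_{2^n}^j)=\delta_{2^m}^{i_j}$, the Boolean inequality $M_G\leq T$ reduces to $T_{i_j,j}=1$ for all $j$, i.e.\ $F(x,G(x))=c$ for every $x$, which is precisely ``(\ref{2.2.2}) implies (\ref{2.2.1}).'' For (ii), $T\leq M_G$ reduces to: every pair $(u,x)$ solving (\ref{2.2.3}) has $u=G(x)$, which is precisely ``(\ref{2.2.1}) implies (\ref{2.2.2}).'' Both directions of each equivalence do indeed fall out of the same rewriting.

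One sentence in your edge-case paragraph is wrong, although it does not damage the proof. You claim that an all-zero column of $T$ is ``harmless for antecedence'' because ``the implication at that $x$ is vacuous.'' It is not vacuous: the hypothesis $U=G(X)$ is instantiated at every $x$ by $u=G(x)$, so at an $x$ whose column of $T$ is all zero the pair $(x,G(x))$ falsifies the implication (\ref{2.2.2})$\Rightarrow$(\ref{2.2.1}), and no antecedence solution can exist. This is still consistent with the theorem, but for the opposite reason to the one you give: $M_G\leq T$ necessarily fails on that column too, since the column of $M_G$ contains a $1$. Your main argument already treats such columns correctly and uniformly through the condition $T_{i_j,j}=1$, so only the parenthetical needs correcting; the remaining edge-case observations (zero columns are genuinely vacuous for the consequence direction, and multiple $1$'s in a column obstruct consequence but not antecedence) are accurate.
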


\section{Generalized Antecedence/Consequence Solutions}
By definition, the antecedence solution  generates a set of state feedback controls
$$
U_j=G_j(X_1,X_2,\cdots,X_n),\quad j=1,2,\cdots,m.
$$

To make this set of state feedback controls meeting our requirement, we need to generalize the previous concepts about antecedence solution to design stabilizer.

First, we replace a single state $C=(C_1,C_2,\cdots,C_s)$ by a set of states. Assume there exists a set $\Omega\subset {\cal D}_s$, called admissible set.
Replacing a constant set $C$, it is said that (\ref{2.2.1}) is true as long as $C=(C_1, C_2, \cdots, C_s)\in \Omega$. Then we can construct a truth matrix $T_{\Omega}$. In Theorem \ref{t2.2.3}, replacing $T$ by $T_{\Omega}$, it is easy to verify that  Theorem \ref{t2.2.3} remains available.

Second, we may consider a subset of states, $W\subset {\cal D}_n$, called restricted set. The concept of antecedence/consequence solutions can be generalized as follows:

\begin{dfn}\label{d3.1.1} Given a set of Boolean equations (\ref{2.2.1}), where $C=(C_1,C_2,\cdots,C_s)\in \Omega\subset {\cal D}_s$.  Consider a set of functions
\begin{align}\label{3.1.1}
U_j=G_j(X_1,X_2,\cdots,X_n), \quad j=1,2, \cdots,m.
\end{align}
\begin{itemize}
\item[(1)] (\ref{3.1.1}) is called a subset antecedence solution of (\ref{2.2.1}) with respect to $W$, if, as long as $(X_1,\cdots,X_n)\in W$, (\ref{3.1.1}) implies (\ref{2.2.1});
\item[(ii)] (\ref{3.1.1}) is called a subset consequence solution of (\ref{2.2.1}) with respect to $W$, if, as long as $(X_1,\cdots,X_n)\in W$,  (\ref{2.2.1}) implies (\ref{3.1.1}).
\end{itemize}
\end{dfn}

Note that at $T_{\Omega}$ each column corresponds to a distinct state $X$, the restriction of $T_{\Omega}$ on $W$, denoted by $\left. T_{\Omega}\right|_{W}$, is obtained by deleting columns which corresponding to $X\not\in W$.

\begin{dfn}\label{d3.1.2}
\begin{itemize}
\item[(i)] Assume $f$ is a subset antecedence/consequence solution with respect to $W$, and $g$ is a  subset antecedence/consequence solution with respect to $W'$. $f$ is said to superior to $g$, denoted by $f \succ g$ (or $g\prec f$), if $W'\subset W$ and
$$
f|_{W'}=g|_{W'}.
$$
\item[(ii)] Assume $f$ is a $W$-antecedence/consequence solution. $f$ is said to be a maximum antecedence/consequence solution, if $g$ is a $W'$-antecedence/consequence solution and $g\succ f$, then $W'=W$ and $g|_{W}=f|_{W}$.
\end{itemize}
\end{dfn}

Note that if a $W$-subset antecedence/consequence solution $f$ is a maximum antecedence/consequence solution, then $W$ is completely determined by $T$ (precisely speaking, by the non-zero columns of $T$), and is independent of $f$. Hence, $W$ is called a maximum (restricted) subset.

Using the same argument as in the proof of Theorem \ref{t2.2.3}, it is straightforward to prove the following result.

\begin{thm}\label{t3.1.3}
Assume the truth table of equations (\ref{2.2.1}) with respect to $\Omega\subset {\cal D}^s$ is $T_{\Omega}$, and state subset
$W\subset {\cal D}^n$ is given.
Then
\begin{itemize}
\item[(i)] (\ref{2.2.2}) is a subset antecedence solution of (\ref{2.2.1}) with respect to subset $W$, if and only if,
\begin{align}\label{3.1.2}
\left.M_G\right|_{W}\leq \left.T_{\Omega}\right|_{W}.
\end{align}
\item[(ii)] (\ref{2.2.2}) is a subset consequence solution of (\ref{2.2.1}) with respect to subset $W$, if and only if,
\begin{align}\label{3.1.3}
\left.T_{\Omega}\right|_{W}\leq \left.M_G\right|_{W}.
\end{align}
\end{itemize}
\end{thm}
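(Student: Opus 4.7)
The plan is to mimic the original argument for Theorem \ref{t2.2.3} from \cite{qia18}, tracking two generalisations: (a) the single constant $c$ is replaced by the admissible set $\Omega$, which only changes which pairs $(u,x)$ count as ``good'' and hence which entries of the truth matrix are $1$; (b) the universal quantifier over states is restricted from all of ${\cal D}^n$ to $W$, which at the algebraic level means discarding the columns indexed by $x\notin W$, since the matrix inequalities in (\ref{3.1.2})–(\ref{3.1.3}) are column-by-column.

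First I would translate Definition \ref{d3.1.1} into algebraic form using $x=\ltimes_{i=1}^nx_i$ and $u=\ltimes_{j=1}^mu_j$: equation (\ref{2.2.1}) becomes $M_Fux\in\Omega_{\mathrm{vec}}$, where $\Omega_{\mathrm{vec}}\subset\Delta_{2^s}$ is the vector form of $\Omega$, and (\ref{3.1.1}) becomes $u=M_Gx$. By construction the $(i,j)$-entry of $T_{\Omega}$ is $1$ iff $M_F\d_{2^m}^i\d_{2^n}^j\in\Omega_{\mathrm{vec}}$, and $\left.T_{\Omega}\right|_W$, $\left.M_G\right|_W$ are obtained by retaining only those columns $j$ with $\d_{2^n}^j\in W_{\mathrm{vec}}$.

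For part (i), fix $x=\d_{2^n}^j\in W$ and write $M_Gx=\mathrm{Col}_j(M_G)=\d_{2^m}^{i_0}$ for some unique $i_0$ (as $M_G$ is a logical matrix). The antecedence implication at $x$ says $M_F\d_{2^m}^{i_0}\d_{2^n}^j\in\Omega_{\mathrm{vec}}$, which is exactly $(T_{\Omega})_{i_0,j}=1$; since $\mathrm{Col}_j(M_G)$ has a single $1$ at row $i_0$, this is equivalent to $\mathrm{Col}_j(M_G)\leq\mathrm{Col}_j(T_{\Omega})$. Ranging $j$ over the indices of $W$ gives (\ref{3.1.2}). For part (ii), fix again $x=\d_{2^n}^j\in W$; the consequence implication at $x$ says that for every $u=\d_{2^m}^i$, if $(T_{\Omega})_{i,j}=1$ then $\d_{2^m}^i=\mathrm{Col}_j(M_G)$, i.e.\ every $1$ in $\mathrm{Col}_j(T_{\Omega})$ is matched by a $1$ in $\mathrm{Col}_j(M_G)$. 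This is precisely $\mathrm{Col}_j(T_{\Omega})\leq\mathrm{Col}_j(M_G)$, and assembling over $W$ yields (\ref{3.1.3}).

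There is no serious obstacle; the proof is essentially a bookkeeping exercise. The one point worth checking carefully is in part (ii): because $\mathrm{Col}_j(M_G)$ has exactly one $1$, the inequality $\left.T_{\Omega}\right|_W\leq\left.M_G\right|_W$ silently forces each retained column of $T_{\Omega}$ to be either zero or a unit vector equal to the corresponding column of $M_G$. This is consistent with the informal meaning of ``consequence solution'' (unique consistent $u$ for each $x\in W$), and confirms that no additional hypothesis on $W$ or $\Omega$ is needed beyond what Definition \ref{d3.1.1} already imposes.
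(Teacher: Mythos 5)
Your proof is correct and is precisely the argument the paper has in mind: the paper simply states that Theorem~\ref{t3.1.3} follows ``using the same argument as in the proof of Theorem~\ref{t2.2.3},'' and your column-by-column verification (exploiting that each column of $M_G$ is a unit vector) is that argument, correctly adapted to the two generalisations of admissible set $\Omega$ and restricted subset $W$. No gaps.
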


A subset antecedence/consequence solution with respect to subset $W$ is briefly called  $W$-antecedence/consequence solution.

\begin{exa}\label{e3.1.4} Recall Example \ref{e2.2.2}.
\begin{itemize}
\item[(i)] Since the truth matrix of (\ref{2.2.5}) is (\ref{2.2.7}). According to Theorem \ref{t2.2.3}, it is obvious that (\ref{2.2.5}) has no antecedence solution.
\item[(ii)] Let $W=\{\d_4^1,\d_4^3,\d_4^4\}$, which is a maximum subset. Set
$u=\d_4[2,1,2,2]x$. According to Theorem \ref{t3.1.3}, this $u$ is a $W$-antecedence solution, which is a maximum subset antecedence solution.
\item[(iii)] Let $W'=\{\d_4^1,\d_4^3\}$. Then
$u=\d_4[2,1,2,1]x$ is a $W'$-antecedence solution. It is also clear that this $u$ is not a maximum solution.
\end{itemize}
\end{exa}

\section{Design of State Feedback Stabilizer}

\begin{dfn}\label{d3.2.1} Consider a BCN
\begin{align}
\label{3.2.1}
\begin{cases}
X_1(t+1)=F_1(X_1(t),\cdots,X_n(t),U_1(t),\cdots,U_m(t)),\\
X_2(t+1)=F_2(X_1(t),\cdots,X_n(t),U_1(t),\cdots,U_m(t)),\\
\vdots\\
X_n(t+1)=F_n(X_1(t),\cdots,X_n(t),U_1(t),\cdots,U_m(t)),\\
\end{cases}
\end{align}
where $X_i(t)\in {\cal D}$, $i=1,\cdots,n$ are state variables,   $U_j(t)\in {\cal D}$, $j=1,\cdots,m$ are controls, $F_i:{\cal D}^{m+n}\ra {\cal D}$, $i=1,\cdots,n$ are Boolean functions.

Let $X^d=(X^d_1,X^d_2,\cdots,X^d_n)\in {\cal D}^n$ be a given state. The BCN is said to be state feedback stabilizable to $X^d$, if there exists a state feedback control sequence
\begin{align}\label{3.2.2}
\begin{cases}
U_1(t)=G_1(X_1(t),\cdots,X_n(t)),\\
U_2(t)=G_2(X_1(t),\cdots,X_n(t)),\\
\vdots\\
U_m(t)=G_m(X_1(t),\cdots,X_n(t)),\\
\end{cases}
\end{align}
such that for the closed-loop network, there exists a $T \in \mathbb Z_{\geq 0}$ such that $X(t)=X^d$, for $t\geq T$.
\end{dfn}

\begin{dfn}\label{d3.2.2}
Consider BCN (\ref{3.2.1}). A state $X^d\in {\cal D}^n$ is called a control fixed point, if there exists a control sequence $U=(U_1, U_2,\cdots,U_m)$, such that
\begin{align}\label{3.2.3}
F_i(X^d,U)=X^d_i,\quad i=1,2,\cdots,n.
\end{align}
\end{dfn}

\begin{rem}\label{r3.2.201} It is obvious that (\ref{3.2.3}) is equivalent to: there exists a state feedback control sequence $U(X)$, such that
\begin{align}\label{3.2.4}
F_i(X^d,U(X^{d}))=X^d_i,\quad i=1,2,\cdots,n.
\end{align}

In fact, for a BCN, free control sequence is equivalent to state feedback control. It is obvious that state feedback control can be replaced by free control sequence. Conversely, since there are only finite states in a BCN, if a free control sequence can do something, then at each point $X\in {\cal D}^n$,  a control $U$ is selected, then this control can be described as $U(X)$.
\end{rem}

The following necessary condition is obvious.

\begin{prp}\label{p3.2.3} Assume BCN (\ref{3.2.1}) is stabilizable to $X^d$, then $X^d$ is a control fixed point.
\end{prp}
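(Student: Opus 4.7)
The plan is to unwind Definition~\ref{d3.2.1} and Definition~\ref{d3.2.2} directly, since the statement is essentially a definitional consequence rather than a deep structural fact. Assume the BCN is state feedback stabilizable to $X^d$. By Definition~\ref{d3.2.1}, there exist state feedback functions $G_1,\ldots,G_m$ and a time $T \in \mathbb{Z}_{\geq 0}$ such that the closed-loop trajectory satisfies $X(t)=X^d$ for all $t\geq T$. I would isolate the one-step transition from $t=T$ to $t=T+1$ and read off the desired witness control.

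Concretely, I would set $U^\star_j := G_j(X^d_1,\ldots,X^d_n)$ for $j=1,\ldots,m$. Since $X(T)=X^d$, the closed-loop equations give $U_j(T)=G_j(X^d_1,\ldots,X^d_n)=U^\star_j$. Applying the open-loop dynamics (\ref{3.2.1}) at time $T$ then yields
\begin{align*}
X^d_i \;=\; X_i(T{+}1) \;=\; F_i\bigl(X^d_1,\ldots,X^d_n,U^\star_1,\ldots,U^\star_m\bigr), \quad i=1,\ldots,n,
\end{align*}
where the first equality uses stability $X(T{+}1)=X^d$. This is exactly condition (\ref{3.2.3}) with control sequence $U=U^\star$, so $X^d$ is a control fixed point per Definition~\ref{d3.2.2}.

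There is no serious obstacle here; the only thing to be careful about is that the control used in Definition~\ref{d3.2.2} need not a priori be a state feedback, while the hypothesis gives us a state feedback. This is handled by Remark~\ref{r3.2.201}, which notes the equivalence between free control sequences and state feedback controls at a single state: evaluating $G$ at $X^d$ produces a well-defined constant control vector $U^\star$, which is all that Definition~\ref{d3.2.2} requires. Thus the proposition follows from a single one-step evaluation of the closed-loop recursion at the fixed state $X^d$.
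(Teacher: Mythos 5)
Your proof is correct and is precisely the one-step evaluation argument the paper has in mind; the paper itself omits the proof, stating only that the necessary condition is ``obvious.'' Evaluating the closed-loop recursion at $X(T)=X^d$ with $U^\star=G(X^d)$ to obtain $F_i(X^d,U^\star)=X^d_i$ is exactly the intended justification, and your remark about passing from state feedback to a constant control via Remark~\ref{r3.2.201} is the right way to reconcile the two definitions.
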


Next, we provide an algorithm, which provides a constructive procedure for stabilizer.

\begin{alg}\label{a3.2.4}
\begin{itemize}
\item Step 0 (Initial Step): Set initial object $\Omega(0)=W_0:=\{(C_1,C_2,\cdots C_n)\}$, where $X^d=(C_1,C_2,\cdots,C_n)$ is the destination point, to which the BCN is designed to be  stabilized. Construct $T_{\Omega(0)}$, and using it to find the maximum set $W_1$ with respect to $\Omega(0)$, and check if $X^d\in W_1$. If $X^d\not\in W_1$, $X^d$ is not a control fixed point, and the corresponding state feedback stabilization is not solvable. Stop the algorithm.

\item Step $k$ (Repeated Step, $k\geq 1$) : Set $\Omega(k)=W_{k}\backslash\bigcup\limits_{i=0}^{k-1} W_{i}$.
\begin{itemize}
\item[(i)] If $\Omega(k)=\emptyset$, then the problem is not solvable. Stop.
\item[(ii)] Else (i.e., $\Omega(k)\neq \emptyset$). Check if
\begin{align}\label{3.2.401}
\bigcup_{i=0}^k \Omega_{i}=\D_{2^n}.
\end{align}
If (\ref{3.2.401}) holds, the stabilization problem is solvable, set last $k$ as $k_*$ and go to Final Step.
Else (i.e., (\ref{3.2.401}) does not hold),  construct $T_{\Omega(k)}$, and use it to find the maximum set $W_{k+1}$ with respect to $\Omega(k)$. Then go back to Step $k+1$.
\end{itemize}

\item Final Step (Stabilizer Constructing Step): Construct $u(t)=M_Gx(t)$ by the following inequalities:
\begin{align}\label{3.2.5}
\left.M_G\right|_{\Omega(i)}
\begin{cases}
\leq \left.T_{\Omega_{0}}\right|_{\Omega(0)},\quad i=0,\\
\leq \left.T_{\Omega_{i-1}}\right|_{\Omega(i)},\quad i=1,\cdots,k^*.
\end{cases}
\end{align}

\end{itemize}

\end{alg}

\begin{thm}\label{t3.2.5}
BCN (\ref{3.2.1}) is stabilizable to $X^d$, if and only if, the Algorithm \ref{a3.2.4} can go through and provide a state feedback stabilizer.
\end{thm}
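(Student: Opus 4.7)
The plan is to establish both directions by interpreting the nested sets $\Omega(0),\Omega(1),\ldots$ as level sets of a minimum-reach-time function: for each $x \in \D_{2^n}$, let $R(x)$ denote the least $t$ for which some open-loop control sequence steers $x$ to $X^d$ in $t$ steps (with $R(x) = \infty$ if no such control exists), and let $R_k := \{x : R(x) = k\}$. I would first prove by induction on $k$ that $\Omega(k) = R_k$. The base case $\Omega(0) = \{X^d\} = R_0$ is definitional. For the inductive step, Theorem \ref{t3.1.3} identifies $W_k$, the maximum restricted subset with respect to $\Omega(k-1)$, with the set of states having some control that sends them into $\Omega(k-1) = R_{k-1}$ in one step; removing $\bigcup_{i<k} W_i$ strips away exactly the states that already admitted a shorter route, leaving precisely $R_k$.

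For sufficiency, assume the algorithm terminates at some $k_*$ with $\bigcup_{i=0}^{k_*}\Omega(i) = \D_{2^n}$, and let $M_G$ satisfy (\ref{3.2.5}). For any $x \in \Omega(i)$ with $i \geq 1$, the logical-matrix structure of $M_G$ combined with $M_G|_{\Omega(i)} \leq T_{\Omega(i-1)}|_{\Omega(i)}$ forces the column of $M_G$ at $x$, say $\d_{2^m}^j$, to sit in a row where $T_{\Omega(i-1)}$ has a $1$, so $F(x, M_G x) \in \Omega(i-1)$. Iterating, the closed-loop trajectory descends through $\Omega(i-1),\Omega(i-2),\ldots$ and enters $\Omega(0) = \{X^d\}$ in at most $i \leq k_*$ steps; the $i = 0$ line of (\ref{3.2.5}) keeps the trajectory at $X^d$ thereafter. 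Since every initial state lies in some $\Omega(i)$, the BCN is stabilizable.

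For necessity, assume the BCN is stabilizable to $X^d$. Proposition \ref{p3.2.3} guarantees that $X^d$ is a control fixed point, ruling out failure at Step $0$. Stabilizability means $R(x) < \infty$ for every $x$, so $\bigcup_k R_k = \D_{2^n}$, and by finiteness of the state space this union stabilizes at some $k_*$ with $R_{k_*} \neq \emptyset$. The identification $\Omega(k) = R_k$ then shows that the algorithm never encounters $\Omega(k) = \emptyset$ before the covering condition (\ref{3.2.401}) holds, and terminates exactly at $k_*$. Constructibility of $M_G$ is automatic: for each $x \in \Omega(i)$ with $i \geq 1$, membership in $W_i$ guarantees a nonzero column of $T_{\Omega(i-1)}|_{\Omega(i)}$ at $x$, so a row with entry $1$ can be selected; the column at $X^d$ is handled by Step $0$ once $X^d \in W_1$.

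The main obstacle I anticipate is the inductive identification $\Omega(k) = R_k$, which requires a careful disjointness check: a state of minimum reach time $k$ must not be absorbed into any earlier $W_i$, and a newly acquired state of $\Omega(k)$ must not secretly admit a route of length less than $k$ through some $\Omega(j)$ with $j < k-1$. Both are handled by the equivalence between $x \in W_{j+1}$ and $x$ reaching $R_j$ in one step, so any state capable of a length-$(j+1)$ route is swept into $\bigcup_{i \leq j+1} W_i$ before stage $k$, leaving $\Omega(k)$ to coincide cleanly with the minimum-time level set $R_k$.
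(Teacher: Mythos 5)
Your proof is correct and follows the same route as the paper's: both directions rest on reading $\Omega(k)$ as the set of states that can reach $X^d$ in exactly $k$ steps, with the feedback built so that each $\Omega(i)$ is mapped into $\Omega(i-1)$ and $\Omega(0)$ is held fixed. Your explicit inductive identification $\Omega(k)=R_k$ merely supplies the detail the paper leaves implicit (and in passing justifies the time-optimality claim of Remark \ref{r3.2.501}), so the argument is essentially the published one.
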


\begin{proof}
Starting from first step, one easily sees that $X\in \Omega(0)$ means there exists a $W_0$-antecedence solution, as a state feedback control, which assures $F_i(X,U(X))=C_i$, $i=1,2,\cdots,n$. That is, the state feedback control can drive $X\in \Omega(0)$ to $C$. Similar argument shows that at step k,
there is a state feedback control, when restrict it on $\Omega(k)$, it drives $X\in \Omega(k)$ to $\Omega(k-1)$. Eventually, if
there exists a $k^*$ such that (\ref{3.2.401}) holds, then all the points can be driven to $C$ within $k^*$ steps. So the successful of the algorithm means the state feedback stabilization is solvable.

To see that if the algorithm fails, the problem is not solvable, we consider $k^*$ as the last step for $\Omega_{k}\neq \emptyset$ and assume
$$
\bigcup_{i=0}^{k^*} \Omega_{i}\neq \D_{2^n}.
$$
Then, there exists a $X\in \D_{2^n}\backslash \left\{\bigcup_{i=0}^{k^*} \Omega_{i}\right\}$. It is clear that no state feedback can drive it to $C$.
\end{proof}

\begin{rem}\label{r3.2.501}
From the proof of Theorem \ref{t3.2.5} one sees easily that if the stabilization problem is solvable, then the state feedback stabilizers obtained from Algorithm \ref{a3.2.4} are ``optimal" in the sense that each point $X\in {\cal D}^n$ can reach the destination at the shortest time.
If we do not require ``time-optimal", we may have more state feedback stabilizers.
If in Algorithm \ref{a3.2.4} the $\Omega(i)$ are replaced by $W_i$, then in addition to time-optimal stabilizers, there are some non-time optimal stabilizers, unfortunately, there are also some state feedback controls, which can not stabilize the network.
\end{rem}

We use an example to depict this algorithm.

\begin{exa}\label{e3.2.6}

Consider the following BCN:

\begin{align}\label{100}
\begin{cases}
X_1(t+1)=X_2(t)\vee U_1(t),\\
X_2(t+1)=X_4(t)\vee\left(U_2(t)\wedge X_1(t)\right),\\
X_3(t+1)=\left(X_1(t)\wedge X_4(t)\right)\bar{\vee}\left(\neg X_3(t)\right),\\
X_4(t+1)=\left(\neg X_1(t)\right)\lra U_2(t).
\end{cases}
\end{align}
Is it possible to stabilize it to $X^d=(1,1,0,1)$?

Consider the following equation:
\begin{align}\label{101}
\begin{cases}
F_1(X,U)=X_2\vee U_1,\\
F_2(X,U)=X_4\vee\left(U_2\wedge X_1\right),\\
F_3(X,U)=\left(X_1\wedge X_4\right)\bar{\vee}\left(\neg X_3\right),\\
F_4(X,U)=\left(\neg X_1\right)\lra U_2.
\end{cases}
\end{align}
Let the structure matrix of $F_i$ be $M_i$, $i=1,2,3,4$. Then $F:=F_1\ltimes F_2\ltimes F_3\ltimes F_4$ has its structure matrix
\begin{align}\label{102}
\begin{array}{ccl}
M_F&=&M_1*M_2*M_3*M_4\\
~&=&\d_{16}[ 2, 4, 4, 2, 2, 4, 4, 2, 3, 7, 1, 5, 3, 7, 1, 5,\\
~&~&~~~~     1, 7, 3, 5, 1, 7, 3, 5, 4, 8, 2, 6, 4, 8, 2, 6,\\
~&~&~~~~     2, 4, 4, 2,10,12,12,10, 3, 7, 1, 5,11,15, 9,13,\\
~&~&~~~~     1, 7, 3, 5, 9,15,11,13, 4, 8, 2, 6,12,16,10,14].
\end{array}
\end{align}
Then
$$
\Omega(0)=W_0=\delta^3_{16} \sim \{C=(1,1,0,1)\}.
$$
Then the truth  matrix of $\Omega(0)$ can be obtained as
\begin{align}\label{103}
T_{\Omega(0)} =\left[
\begin{array}{cccccccccccccccc}
0&0&0&0&0&0&0&0&1&0&0&0&1&0&0&0\\
0&0&1&0&0&0&1&0&0&0&0&0&0&0&0&0\\
0&0&0&0&0&0&0&0&1&0&0&0&0&0&0&0\\
0&0&1&0&0&0&0&0&0&0&0&0&0&0&0&0\\
\end{array}\right].
\end{align}
From (\ref{103}) it is clear that
\begin{align}\label{104}
W_1=\d_{16}\{3,7,9,13\}.
\end{align}
Since $(1,1,0,1)\sim \d_{16}^3\in W_1$, $(1,1,0,1)$, the objective state is a control fixed point.

Next, set
\begin{align}\label{105}
\Omega(1)=W_1\backslash W_0=\d_{16}\{7,9,13\}.
\end{align}
We consider
\begin{align}\label{106}
(F_1(X,U), F_2(X,U), F_3(X,U), F_4(X,U))\in \Omega(1),
\end{align}
The truth matrix can be obtained as
\begin{align}\label{107}
T_{\Omega(1)} =\left[
\begin{array}{cccccccccccccccc}
0&0&0&0&0&0&0&0&0&1&0&0&0&1&0&0\\
0&1&0&0&0&1&0&0&0&0&0&0&0&0&0&0\\
0&0&0&0&0&0&0&0&0&1&0&0&0&0&1&1\\
0&1&0&0&1&0&0&1&0&0&0&0&0&0&0&0\\
\end{array}\right].
\end{align}
Then we have
\begin{align}\label{108}
W_2=\d_{16}\{2,5,6,8,10,14,15,16\}.
\end{align}
\begin{align}\label{109}
\begin{array}{ccl}
\Omega(2)&=&W_2\backslash \{W_1\cup W_0\}\\
~&=&\d_{16}\{2,5,6,8,10,14,15,16\}.
\end{array}
\end{align}
Third step, we consider
\begin{align}\label{110}
(F_1(X,U), F_2(X,U), F_3(X,U), F_4(X,U))\in \Omega(2),
\end{align}
\begin{align}\label{111}
T_{\Omega(2)} =\left[
\begin{array}{cccccccccccccccc}
1&0&0&1&1&0&0&1&0&0&0&1&0&0&0&1\\
0&0&0&1&0&0&0&1&0&1&1&1&0&1&1&1\\
1&0&0&1&1&0&0&1&0&0&0&1&0&1&0&0\\
0&0&0&1&0&1&0&0&0&1&1&1&0&1&1&1\\
\end{array}\right].
\end{align}
Then we have
\begin{align}\label{112}
W_3=\d_{16}\{1,4,5,6,8,10,11,12,14,15,16\}.
\end{align}
Hence,
\begin{align}\label{113}
\Omega(3):=W_3\backslash \{W_2\cup W_1\cup W_0\}=\d_{16}\{1,4,11,12\}.
\end{align}

Now since
$$
\Omega(0)\cup \Omega(1)\cup \Omega(2)\cup \Omega(3)=\D_{16},
$$
the BCN (\ref{100}) is state feedback stabilizable to $(1,1,0,1)$.

Finally, we construct the state feedback control. It should satisfy the following condition
\begin{align}\label{114}
\begin{array}{lr}
U|_{\Omega(1)\cup \Omega(0)}\leq T_{\Omega(0)}|_{\Omega(1)\cup \Omega(0)}&~~~~~(a)\\
U|_{\Omega(2)}\leq T_{\Omega(1)}|_{\Omega(2)}&~~~~~(b)\\
U|_{\Omega(3)}\leq T_{\Omega(2)}|_{\Omega(3)}&~~~~~(c)\\
\end{array}
\end{align}
For (\ref{114})(a), one feasible choice is:
\begin{align}\label{115}
\begin{array}{l}
u(\d_{16}^3)=\d_4^4\in \d_4\{2,4\},\\
u(\d_{16}^7)=\d_4^2,\\
u(\d_{16}^9)=\d_4^3\in \d_4\{1,3\},\\
u(\d_{16}^{13})=\d_4^1.\\
\end{array}
\end{align}
Note that where $\in\d_4\{*,*\}$ means it has multiple choices.

For (\ref{114})(b), one feasible choice is:
\begin{align}\label{116}
\begin{array}{l}
u(\d_{16}^2)=\d_4^2\in \d_4\{2,4\},\\
u(\d_{16}^5)=\d_4^4,\\
u(\d_{16}^6)=\d_4^2,\\
u(\d_{16}^{8})=\d_4^4,\\
u(\d_{16}^{10})=\d_4^3\in \d_4\{1,3\},\\
u(\d_{16}^{14})=\d_4^1,\\
u(\d_{16}^{15})=\d_4^3,\\
u(\d_{16}^{16})=\d_4^3.\\
\end{array}
\end{align}
For (\ref{114})(c), one feasible choice is:
\begin{align}\label{117}
\begin{array}{l}
u(\d_{16}^1)=\d_4^1\in \d_4\{1,3\},\\
u(\d_{16}^4)=\d_4^2\in \d_4\{1,2,3,4\},\\
u(\d_{16}^{11})=\d_4^2\in \d_4\{2,4\},\\
u(\d_{16}^{12})=\d_4^4\in \d_4\{1,2,3,4\}.\\
\end{array}
\end{align}
Putting (\ref{115})-(\ref{117}) together, we have a state feedback stabilizer as
\begin{align}\label{118}
u(t)=M_Gx(t),
\end{align}
where
$$
M_G=\d_{4}[1,2,4,2,4,2,2,4,3,3,2,4,1,1,3,3].
$$
Then
$$
\begin{array}{ccl}
M_G^1&=&(I_2\otimes \J^T_2)M_G\\
~&=&\d_2[1,1,2,1,2,1,1,2,2,2,1,2,1,1,2,2],
\end{array}
$$
$$
\begin{array}{ccl}
M_G^2&=&(\J^T_2\otimes I_2)M_G\\
~&=&\d_2[1,2,2,2,2,2,2,2,1,1,2,2,1,1,1,1].
\end{array}
$$
Then it is easy to figure out the state feedback stabilizer as
\begin{align}\label{119}
\begin{array}{ccl}
U_1(t)&=&\left(X_1(t)\wedge X_2(t)\wedge X_3(t)\right)\\
~&~&\vee  \left(X_1(t)\wedge X_2(t)\wedge \neg X_3(t)\wedge \neg X_4(t)\right)\\
~&~&\vee  \left(X_1(t)\wedge \neg X_2(t)\wedge X_3(t)\wedge \neg X_4(t)\right)\\
~&~&\vee  \left(X_1(t)\wedge \neg X_2(t)\wedge \neg X_3(t)\wedge  X_4(t)\right)\\
~&~&\vee  \left(\neg X_1(t)\wedge X_2(t)\wedge \neg X_3(t)\wedge X_4(t)\right)\\
~&~& \vee  \left(\neg X_1(t)\wedge \neg X_2(t)\wedge X_3(t)\right),\\
U_2(t)&=&\left(X_1(t)\wedge X_2(t)\wedge X_3(t)\wedge X_4(t)\right)\\
~&~&\vee  \left(\neg X_1(t)\wedge X_2(t)\wedge X_3(t)\right)\\
~&~&\vee \left(\neg X_1(t)\wedge \neg X_2(t)\right).\\
\end{array}
\end{align}

To verify whether this stabilizer works we calculate the closed-loop BCN, which is
\begin{align}\label{120}
\begin{array}{ccl}
x(t+1)&=&M_Fu(t)x(t)=M_FM_Gx(t)x(t)\\
~&=&M_FM_GPR_{16}x(t):=M_cx(t),
\end{array}
\end{align}
where
$$
\begin{array}{ccl}
M_c&=&M_FM_G PR_{16}\\
~&=&\d_{16}[2,7,3,5,9,7,3,13,3,7,2,6,3,7,9,13].
\end{array}
$$

It is ready to verify that
$$
M_c^3=\d_{16}[3,3,3,3,3,3,3,3,3,3,3,3,3,3,3,3].
$$
That is, after three steps all the trajectories of the closed-loop BCN converge to $\d_{16}^3\sim (1,1,0,1)$.

The state transition graph of (\ref{120}) is shown in Fig. \ref{Fig3.2.1}, where the only attractor is the point $\d_{16}^3\sim (1,1,0,1)$.


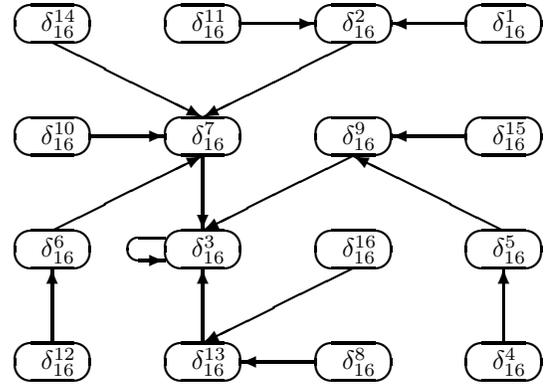
\begin{figure}

\centering
\setlength{\unitlength}{0.5cm}
\begin{picture}(16,12)\thicklines
\put(2,1.5){\oval(2,1)}
\put(6,1.5){\oval(2,1)}
\put(10,1.5){\oval(2,1)}
\put(14,1.5){\oval(2,1)}
\put(2,4.5){\oval(2,1)}
\put(6,4.5){\oval(2,1)}
\put(10,4.5){\oval(2,1)}
\put(14,4.5){\oval(2,1)}
\put(2,7.5){\oval(2,1)}
\put(6,7.5){\oval(2,1)}
\put(10,7.5){\oval(2,1)}
\put(14,7.5){\oval(2,1)}
\put(2,10.5){\oval(2,1)}
\put(6,10.5){\oval(2,1)}
\put(10,10.5){\oval(2,1)}
\put(14,10.5){\oval(2,1)}
\put(2,2){\vector(0,1){2}}
\put(6,2){\vector(0,1){2}}
\put(10,4){\vector(-2,-1){4}}
\put(9,1.5){\vector(-1,0){2}}
\put(14,2){\vector(0,1){2}}
\put(2,5){\vector(2,1){4}}
\put(6,7){\vector(0,-1){2}}
\put(10,7){\vector(-2,-1){4}}
\put(14,5){\vector(-2,1){4}}
\put(3,7.5){\vector(1,0){2}}
\put(13,7.5){\vector(-1,0){2}}
\put(2,10){\vector(2,-1){4}}
\put(10,10){\vector(-2,-1){4}}
\put(7,10.5){\vector(1,0){2}}
\put(13,10.5){\vector(-1,0){2}}
\put(5,4.5){\oval(2,0.6)[l]}
\put(4.5,4.2){\vector(1,0){0.5}}
\put(1.7,1.3){$\d_{16}^{12}$}
\put(5.7,1.3){$\d_{16}^{13}$}
\put(9.7,1.3){$\d_{16}^8$}
\put(13.7,1.3){$\d_{16}^4$}
\put(1.7,4.3){$\d_{16}^{6}$}
\put(5.7,4.3){$\d_{16}^{3}$}
\put(9.7,4.3){$\d_{16}^{16}$}
\put(13.7,4.3){$\d_{16}^5$}
\put(1.7,7.3){$\d_{16}^{10}$}
\put(5.7,7.3){$\d_{16}^{7}$}
\put(9.7,7.3){$\d_{16}^9$}
\put(13.7,7.3){$\d_{16}^{15}$}
\put(1.7,10.3){$\d_{16}^{14}$}
\put(5.7,10.3){$\d_{16}^{11}$}
\put(9.7,10.3){$\d_{16}^2$}
\put(13.7,10.3){$\d_{16}^1$}
\end{picture}

\caption{State Transition Graph of (\ref{120})\label{Fig3.2.1}}

\end{figure}
Finally, it is worthy noting that from (\ref{115})-(\ref{117}) one sees easily that there are
$$
2\times 2\times 2\times 2\times 2\times 4\times 2\times 4=1024
$$
time optimal feasible state feedback stabilizers.
\end{exa}

\section{Set Stabilization}

\begin{dfn}\label{d4.0} Consider BCN (\ref{3.2.1}).
Let $\cal M \subset {\cal D}^n$ be a given set of states. The BCN is said to be state feedback stabilizable to $\cal M$, if there exists a state feedback control sequence (\ref{3.2.2}) such that for any initial state $X_0$ of the closed-loop network, there exists a $T \in \mathbb Z_{\geq 0}$ such that,
\begin{align}\label{4.0}
X(t; X_0,U(X))\in \cal M, \quad \forall t \geq T.
\end{align}

\end{dfn}

\begin{dfn}\label{d4.1}
Consider BCN (\ref{3.2.1}), and assume $\cal M\subset {\cal D}^n$. $\cal M$ is said to be control invariant set, if for each state $X_0\in \cal M$, there exists a control sequence $U=(U_1,U_2,\cdots,U_m)$ such that $F(X_0,U)\in \cal M$.
\end{dfn}

To verify whether a state set $\cal M$ is a control invariant set, we have the following results.

\begin{prp}\label{p4.2} Consider BCN  (\ref{3.2.1}) and assume a set of states $\cal M\subset \cal D^n$ is given to be stabilized. Construct the corresponding truth matrix $T_{\cal M}$. Let the maximum set for $T_{\cal M}$ be $\Theta$. Then $\cal M$ is a control invariant set, if,
\begin{align}\label{4.5}
\cal M\cap \Theta =\cal M
\end{align}
In other words, $\cal M \subset \Theta$.
\end{prp}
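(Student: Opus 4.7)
The plan is to unwind the two definitions and observe that the conclusion is essentially immediate once the maximum set $\Theta$ is interpreted correctly. First I would recall how the truth matrix $T_{\cal M}$ is built in the set-stabilization context: its columns are indexed by the states $X\in\D_{2^n}$, its rows by the controls $U\in\D_{2^m}$, and the $(U,X)$-entry is $1$ precisely when $F(X,U)\in \cal M$ (i.e., when the admissible condition ``next state lies in $\cal M$'' is satisfied). This is exactly the generalization of the truth matrix from Section~III with the admissible set chosen to be $\cal M$.

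Next I would invoke the characterization of the maximum restricted subset that follows Definition~\ref{d3.1.2}, namely that $\Theta$ is determined by the non-zero columns of $T_{\cal M}$. Concretely, $X\in \Theta$ if and only if the column of $T_{\cal M}$ indexed by $X$ contains at least one entry equal to $1$, which by the previous step is equivalent to the existence of some control $U$ with $F(X,U)\in\cal M$. I would state this as a short auxiliary observation (or simply as a consequence of the definition of maximum set).

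With this in hand the proof is a one-line deduction. Assume $\cal M\subset \Theta$ and fix any $X_0\in \cal M$. Since $X_0\in \Theta$, the auxiliary observation yields a control $U=(U_1,\dots,U_m)$ such that $F(X_0,U)\in \cal M$. This is exactly the defining property of a control invariant set in Definition~\ref{d4.1}, so $\cal M$ is control invariant.

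Because the argument is essentially a definition-chase, I do not foresee a genuine obstacle. The only point that merits some care is justifying the identification of $\Theta$ with the set of states whose column of $T_{\cal M}$ is non-zero: if one insisted on deriving this from Theorem~\ref{t3.1.3} rather than from the remark after Definition~\ref{d3.1.2}, one would argue that whenever the column of $T_{\cal M}$ at $X$ has a $1$ in row $U$, the choice $M_G$ with $\Col_X(M_G)=U$ yields $\left.M_G\right|_{\{X\}}\leq \left.T_{\cal M}\right|_{\{X\}}$, so $u=M_Gx$ is a $\{X\}$-antecedence solution and can be extended to the maximum subset $\Theta$. Packaging this carefully is the only step that requires attention; the rest is immediate.
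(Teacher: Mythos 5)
Your proposal is correct and follows essentially the same route as the paper: the paper's proof likewise observes that, by definition of the maximum set, there is a $\Theta$-antecedence solution $U(X)$ with $F(X_0,U(X_0))\in{\cal M}$ for all $X_0\in\Theta$, and then concludes from ${\cal M}\subset\Theta$ that Definition~\ref{d4.1} is satisfied. Your extra care in identifying $\Theta$ with the set of non-zero columns of $T_{\cal M}$ is a harmless (and slightly more explicit) unpacking of the same fact.
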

\begin{proof}
By definition, there exists at least one $\Theta$-antecedence solution $U(X)$ such that
\begin{align}\label{4.6}
F(X_0,U)\in \cal M, \quad \forall X_0 \in \Theta
\end{align}

Since $\cal M\subset \Theta$, then  it is obvious that $F(X_0,U)\in \cal M,\quad X_0\in \cal M$.
\end{proof}

On the other hand, in more general scenarios where $\cal M \not\subset \Theta$, yet $\cal M \cap \Theta \neq \emptyset$, we have the following remark.

\begin{rem}\label{p4.3} In the aforementioned scenarios where $\cal M \not\subset \Theta$, yet $\cal M \cap \Theta \neq \emptyset$, the control invariant set $W_0$ within the state set $\cal M$ can be determined by,
\begin{align}\label{4.7}
W_0 = \cal M \cap \Theta
\end{align}
\end{rem}

The proof is the same as above, and for brevity, it is not demonstrated here. In \cite {guo15}, the concept of the largest control invariant set is given, through which, the optimal state feedback controls can be designed. Based on the property of Ledley Antecedence Solution, it is quite obvious that the control invariant set solved using the maximum set of the truth matrix and (\ref{4.7}) is the largest control invariant set. Similar to point stabilization case, we have the following algorithm.

\begin{alg}\label{a4.4}
\begin{itemize}
\item Step 0 (determining the largest control invariant set): Set the destination set $\cal M$, to which the BCN is designed to be stabilized. Construct $T_{\cal M}$, and use it to find the maximum set $\Theta$ with respect to $\cal M$. Check if $\cal M \cap \Theta = \emptyset$. If $\cal M \cap \Theta = \emptyset$, the corresponding state feedback stabilization is not solvable. Stop the algorithm. Else (i.e., $W_0 = \cal M \cap \Theta$), calculate the corresponding truth matrix $T_{W_0}$ and the maximum set $W_1$, set $\Omega(1)=W_1\backslash W_0$ and go to step $k=1$.

\item Step $k$ (Repeated Step, $k\geq 1$) :  Same as Step $k$ of Algorithm \ref{a3.2.4}.

\item Final Step (Stabilizer Constructing Step): Same as Final Step of Algorithm \ref{a3.2.4}.

\end{itemize}

\end{alg}

The following result can be proved using similar argument for Theorem \ref{t3.2.5}.
\begin{thm}\label{t4.5}
BCN (\ref{3.2.1}) is stabilizable to $W_0$, if and only if, the Algorithm \ref{a4.4} can go through and provide a state feedback stabilizer.
\end{thm}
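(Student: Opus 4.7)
The plan is to mirror the proof of Theorem \ref{t3.2.5}, with the single adaptation that the ``destination'' is replaced by the largest control invariant subset $W_0 = \cal M\cap \Theta$ of $\cal M$ instead of a single point $X^d$. The main two ingredients I will reuse are: (a) the characterization from Theorem \ref{t3.1.3} that lets me read off, from $T_{W_{k-1}}$, precisely those states on which a $W_{k-1}$-antecedence solution exists; and (b) the inductive layering of $\Omega(k)$'s that ensures one-step transitions from $\Omega(k)$ into $\bigcup_{i<k}W_i$.

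For sufficiency, I will assume that Algorithm \ref{a4.4} succeeds, i.e., terminates with $\bigcup_{i=0}^{k^*}\Omega(i) = \D_{2^n}$. The key initial observation is that at Step 0 the set $W_0=\cal M\cap\Theta$ is, by Proposition \ref{p4.2} and Remark \ref{p4.3}, a control invariant subset of $\cal M$; hence there is at least one state feedback $U(X)$, defined on $W_0$, for which $F(X,U(X))\in W_0\subset\cal M$ for every $X\in W_0$. For each $k\ge 1$, the inequality $\left.M_G\right|_{\Omega(k)}\le \left.T_{W_{k-1}}\right|_{\Omega(k)}$ from (\ref{3.2.5}) furnishes, by Theorem \ref{t3.1.3}(i), a state feedback on $\Omega(k)$ that drives every $X\in\Omega(k)$ into $W_{k-1}\subseteq\bigcup_{i<k}W_i$ in one step. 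Piecing these together over the disjoint covering $\D_{2^n}=\bigsqcup_{i=0}^{k^*}\Omega(i)$ yields a well-defined state feedback; a straightforward induction on $k$ shows that any trajectory starting from $\Omega(k)$ enters $W_0$ within $k$ steps and then stays in $W_0\subseteq\cal M$ forever, giving stabilization with $T\le k^*$.

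For necessity I argue by contrapositive: if Algorithm \ref{a4.4} fails, BCN (\ref{3.2.1}) is not stabilizable to $\cal M$. There are two failure modes. In the first, $W_0=\cal M\cap\Theta=\emptyset$, meaning no nonempty control invariant subset of $\cal M$ exists; but any stabilizing state feedback $U(X)$ would make the closed-loop orbit from some state $X_0$ eventually confined to $\cal M$, so its limit cycle would supply a nonempty control invariant subset of $\cal M$, a contradiction. In the second, some $\Omega(k)=\emptyset$ before $\D_{2^n}$ is exhausted; then picking $X\in \D_{2^n}\setminus\bigcup_{i=0}^{k-1}W_i$, the definition of $W_k$ as the maximum set associated with $T_{W_{k-1}}$ (together with Theorem \ref{t3.1.3}) implies no control can drive $X$ into $\bigcup_{i<k}W_i$ in one step, and by induction on the step count no state feedback can drive $X$ into $W_0$ in finitely many steps, hence $X$ cannot be stabilized to $\cal M$.

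The step I expect to be the main obstacle is the second failure mode of the necessity argument, because the set stabilization definition only demands $X(t)\in\cal M$ eventually, not $X(t)\in W_0$. I must rule out the possibility that a trajectory lingers in $\cal M\setminus W_0$ indefinitely. The resolution is that $W_0$ is the \emph{largest} control invariant subset of $\cal M$ (this is precisely the content embedded in the remark after Remark \ref{p4.3}, interpreting $\Theta$ via the maximum-set property of the truth matrix): any orbit confined to $\cal M$ forever lies, eventually, inside some control invariant subset, which must be contained in $W_0$. Thus ``eventually in $\cal M$'' is equivalent to ``eventually in $W_0$'', and the algorithm's failure to reach a given $X$ really does preclude stabilization.
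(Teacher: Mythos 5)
Your proof is correct and follows essentially the same route as the paper, which itself only states that Theorem \ref{t4.5} ``can be proved using similar argument for Theorem \ref{t3.2.5}'': the layered $\Omega(k)$ induction for sufficiency and the exhaustion/contrapositive argument for necessity are exactly the paper's intended adaptation. Your explicit treatment of the one genuinely new point --- that ``eventually in ${\cal M}$'' and ``eventually in $W_0$'' coincide because any closed-loop limit cycle inside ${\cal M}$ is a control invariant subset and hence contained in $W_0={\cal M}\cap\Theta$ (the paper's Proposition \ref{p4.2} and Remark \ref{p4.3}) --- is more careful than the paper's own terse deferral, not a departure from it.
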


It is also worthy noting that this algorithm provides the time-optimal stabilizers. 
In the following an example is used to describe the design process.

\begin{exa}\label{e4.6} Recall Example \ref{e3.2.6}, and consider BCN (\ref{100}), and set $\cal M=\{(1,0,1,0)\sim \d_{16}^6, (1,0,0,1)\sim \d_{16}^7,
(0,1,0,1)\sim \d_{16}^{12}\}$. This example investigates whether BCN (\ref{100}) can be stabilized to $\cal M$.

First, construct the truth matrix $T_{\cal M}$ corresponding to $\cal M$,
\begin{align*}
T_{\cal M} =\left[
\begin{array}{cccccccccccccccc}
0&0&0&0&0&0&0&0&0&1&0&0&0&1&0&0\\
0&1&0&0&0&1&0&0&0&0&0&1&0&0&0&1\\
0&0&0&0&0&1&1&0&0&1&0&0&0&0&0&0\\
0&1&0&0&0&0&0&0&0&0&0&1&1&0&0&0
\end{array}\right].
\end{align*}

The maximum set is
$$
\Theta=\d_{16}\left\{2,6,7,10,12,13,14,16\right\}.
$$
Since $\cal M \cap \Theta = \cal M$, set $\cal M$ is control invariant set. For easier understanding, denote $W_0=\cal M$, $W_1=\Theta$, set $\Omega(1):=W_1\backslash W_0$. Then it is easy to calculate that
\begin{align*}
T_{\Omega(1)} =\left[
\begin{array}{cccccccccccccccc}
1&0&0&1&1&0&0&1&0&0&0&0&0&0&0&0\\
0&0&0&0&0&0&0&0&0&0&1&0&0&0&1&0\\
1&0&0&1&1&0&0&1&0&0&0&0&0&0&0&1\\
0&0&0&0&0&0&0&1&0&0&1&0&0&1&1&1
\end{array}\right].
\end{align*}

Then $W_2=\d_{16}\{1,4,5,8,11,14,15,16\}$, and
set
$$
\begin{array}{ccl}
\Omega(2)&:=&W_2\backslash (W_1\cup W_0)\\
~&=& \d_{16}\{1,4,5,8,11,15\}.
\end{array}
$$
We calculate that
\begin{align*}
T_{\Omega(2)} =\left[
\begin{array}{cccccccccccccccc}
0&1&1&0&0&1&1&0&0&0&1&1&0&0&1&1\\
1&0&0&1&1&0&0&1&1&1&0&0&1&1&0&0\\
0&1&1&0&0&0&0&0&0&0&1&1&1&1&0&0\\
1&0&0&1&0&1&1&0&1&1&0&0&0&0&0&0
\end{array}\right].
\end{align*}

One sees that
$$
\begin{array}{ccl}
W_3&=\d_{16}\{&1,2,3,4,5,6,7,8,\\
~&~&9,10,11,12,13,14,15,16\},
\end{array}
$$
and
$$
\begin{array}{ccl}
\Omega(3)&=W_3\backslash (W_2\cup W_1\cup W_0)=\d_{16}\{3,9\}.
\end{array}
$$
Since
$$
\Omega(0)\cup \Omega(1)\cup \Omega(2)\cup \Omega(2)=\D_{16},
$$
we conclude that the BCN (\ref{100}) is a state feedback stabilizable to $\cal M$.

Afterwards, we construct the state feedback stabilizer.

Since $U(X)$ for $X\in \Omega(0) \cup \Omega(1)$ is determined by $T_{\Omega(0)}$, we choose one feasible solution as follows:

\begin{align}\label{4.2}
\begin{array}{l}
u(\d_{16}^2)=\d_4^4\in \d_4\{2,4\}\\
u(\d_{16}^6)=\d_4^3\in \d_4\{2,3\}\\
u(\d_{16}^7)=\d_4^3\\
u(\d_{16}^{10})=\d_4^1\in \d_4\{1,3\}\\
u(\d_{16}^{12})=\d_4^4\in \d_4\{2,4\}\\
u(\d_{16}^{14})=\d_4^1\\
u(\d_{16}^{16})=\d_4^2.
\end{array}
\end{align}

Using $T_{\Omega(1)}$, we can determine  $U(X)$ for $X\in \Omega(2)$, that is

\begin{align}\label{4.3}
\begin{array}{l}
u(\d_{16}^1)=\d_4^1\in \d_4\{1,3\}\\
u(\d_{16}^4)=\d_4^1\in \d_4\{1,3\}\\
u(\d_{16}^5)=\d_4^1\in \d_4\{1,3\}\\
u(\d_{16}^{8})=\d_4^4\in \d_4\{1,3,4\}\\
u(\d_{16}^{11})=\d_4^2\in \d_4\{2,4\}\\
u(\d_{16}^{15})=\d_4^4\in \d_4\{2,4\}.
\end{array}
\end{align}

Using $T_{\Omega(2)}$, we can determine  $U(X)$ for $X\in \Omega(3)$ as

\begin{align}\label{4.4}
\begin{array}{l}
u(\d_{16}^3)=\d_4^3\in \d_4\{1,3\}\\
u(\d_{16}^9)=\d_4^2\in \d_4\{2,4\}.
\end{array}
\end{align}

Assume
$$
u(t)=M_Gx(t),
$$
where $M_G\in {\cal L}_{4\times 16}$. Summarizing the choices (\ref{4.2})-(\ref{4.4}), a feasible control is
\begin{align}\label{200}
M_G=\d_{4}[1,2,3,1,1,3,3,4,2,1,2,4,4,1,4,2].
\end{align}
We conclude that
(\ref{200}) provides a state feedback stabilizer for BCN (\ref{100}).

Finally, we may check the closed-loop system. It is
\begin{align}\label{201}
\begin{array}{ccl}
x(t+1)&=&M_Fu(t)x(t)=M_FM_Gx(t)x(t)\\
~&=&M_FM_GPR_{16}x(t):=M_Cx(t).
\end{array}
\end{align}
It is easy to calculate that
\begin{align}\label{202}
M_C=\d_{16}
[2,7,4,2,2,12,12,13,4,7,2,6,12,7,10,6].
\end{align}

The state transition graph of this closed-loop network is described in Fig. \ref{Fig4.1}, which shows that there are only one attractor $C=\{\d_{12}^6,~\d_{12}^{12}\}$. Hence, all trajectories converge to $C\subset W_0$.

\vskip 2mm

\begin{figure}
\centering
\setlength{\unitlength}{0.5cm}
\begin{picture}(16,16)\thicklines
\put(10,1.5){\oval(2,1)}
\put(14,1.5){\oval(2,1)}
\put(2,4.5){\oval(2,1)}
\put(6,4.5){\oval(2,1)}
\put(10,4.5){\oval(2,1)}
\put(14,4.5){\oval(2,1)}
\put(2,7.5){\oval(2,1)}
\put(6,7.5){\oval(2,1)}
\put(10,7.5){\oval(2,1)}
\put(14,7.5){\oval(2,1)}
\put(14,10.5){\oval(2,1)}
\put(14,13.5){\oval(2,1)}
\put(4.5,10.5){\oval(2,1)}
\put(7.5,10.5){\oval(2,1)}
\put(10,10.5){\oval(2,1)}
\put(2,10.5){\oval(2,1)}
\put(2,5){\vector(0,1){2}}
\put(2,10){\vector(0,-1){2}}
\put(6,5){\vector(0,1){2}}
\put(4,10){\vector(1,-1){2}}
\put(8,10){\vector(-1,-1){2}}
\put(10,2){\vector(0,1){2}}
\put(10,5){\vector(0,1){2}}
\put(10,10){\vector(0,-1){2}}
\put(14,2){\vector(0,1){2}}
\put(14,5){\vector(0,1){2}}
\put(14,13){\vector(0,-1){2}}
\put(3,7.5){\vector(1,0){2}}
\put(7,7.5){\vector(1,0){2}}
\put(11,7.5){\vector(1,0){2}}
\put(13.5,8){\vector(0,1){2}}
\put(14.5,10){\vector(0,-1){2}}
\put(1.7,4.3){$\d_{16}^9$}
\put(1.7,7.3){$\d_{16}^4$}
\put(1.7,10.3){$\d_{16}^3$}
\put(4.2,10.3){$\d_{16}^5$}
\put(7.2,10.3){$\d_{16}^1$}
\put(5.7,4.3){$\d_{16}^{11}$}
\put(5.7,7.3){$\d_{16}^2$}
\put(9.7,1.3){$\d_{16}^{15}$}
\put(9.7,4.3){$\d_{16}^{10}$}
\put(9.7,7.3){$\d_{16}^7$}
\put(9.7,10.3){$\d_{16}^{14}$}
\put(13.7,1.3){$\d_{16}^8$}
\put(13.7,4.3){$\d_{16}^{13}$}
\put(13.7,7.3){$\d_{16}^{12}$}
\put(13.7,10.3){$\d_{16}^6$}
\put(13.7,13.3){$\d_{16}^{16}$}
\end{picture}
\caption{State Transition Graph of (\ref{201})}\label{Fig4.1}
\end{figure}
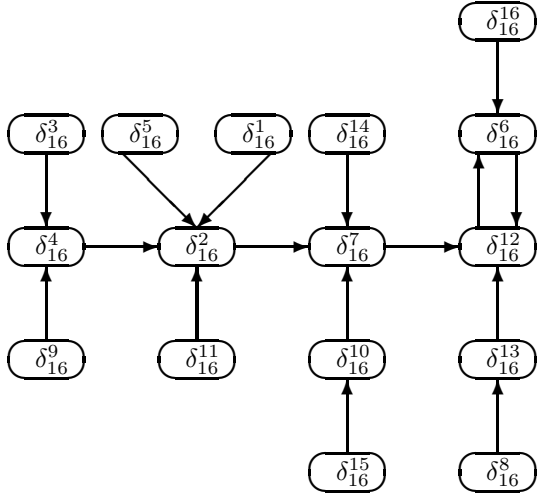

\vskip 2mm

From (\ref{4.2})-(\ref{4.4}) it is easy to see that there are 6144 feasible state feedback stabilizers.

\end{exa}

\section{Stabilization of General Logical Systems}

The method discussed in previous sections are all applicable to $k$-valued or  mix-valued logic. In this section we give an example to demonstrate this.

\begin{exa}\label{est.6.1} Consider a mix-valued logical network
\begin{align}\label{st.6.1}
\begin{cases}
X_1(t+1)=X_1(t)\diamondsuit X_2(t),\\
X_2(t+1)=X_1(t)\Box U(t),
\end{cases}
\end{align}
where $X_1(t)\in {\cal D}_2$, $X_2(t),~U(t)\in {\cal D}_3$, $\diamondsuit :{\cal D}_2\times {\cal D}_3\ra {\cal D}_2$, $\Box:  {\cal D}_2\times {\cal D}_3\ra {\cal D}_3$, with their structure matrices as

\begin{align}\label{st.6.2}
M_{\diamondsuit}=\d_2[1,1,2,2,2,1];
\end{align}

\begin{align}\label{st.6.3}
M_{\Box}=\d_3[1,2,3,2,3,1].
\end{align}

\begin{itemize}
\item[(i)] Is it possible to stabilize (\ref{st.6.1}) to $(1,1)\sim\d_6^1$?

It is easy to calculate that
\begin{align}\label{st.6.301}
x(t+1)=M_Fu(t)x(t),
\end{align}
where
$$
M_F=\d_6[1,1,4,5,5,2,2,2,5,6,6,3,3,3,6,4,4,1].
$$

Set $\Omega(0)=W_0=\{(1,1)\}$. Using (\ref{st.6.301}), the truth matrix with respect to $\Omega(0)$ is
\begin{align}\label{st.6.4}
T_{\Omega(0)}=\begin{bmatrix}
1&1&0&0&0&0\\
0&0&0&0&0&0\\
0&0&0&0&0&1\\
\end{bmatrix}.
\end{align}
Hence,
\begin{align}\label{st.6.5}
W_1=\d_6\{1,2,6\},
\end{align}
and
\begin{align}\label{st.6.6}
\Omega(1)=W_1\backslash W_0=\d_6\{2,6\}.
\end{align}

The truth matrix with respect to $\Omega(1)$ is
\begin{align}\label{st.6.7}
T_{\Omega(1)}=\begin{bmatrix}
0&0&0&0&0&1\\
1&1&0&1&1&0\\
0&0&1&0&0&0\\
\end{bmatrix}.
\end{align}
Hence,
\begin{align}\label{st.6.8}
W_2=\d_6\{1,2,3,4,5,6\},
\end{align}
and
\begin{align}\label{st.6.9}
\Omega(2)=W_2\backslash (W_0\cup W_1)=\d_6\{3,4,5\}.
\end{align}
Now
$$
\Omega(0)\cup \Omega(1)\cup \Omega(2)={\cal D}_6.
$$
Hence, mix-valued logical system (\ref{st.6.1}) can be stabilized to $(1,1)\sim \d_6^1$.

Next, we construct a state feedback stabilizer. Similarly to Boolean cases, a feasible state feedback control can be obtained as
\begin{align}\label{st.6.10}
u(t)=M_Gx(t),
\end{align}
where
$$
M_G=\d_3[1,1,3,2,2,3].
$$

To verify the result we consider the closed-loop network, which is
\begin{align}\label{st.6.11}
x(t+1)=M_FM_Gx^2(t)=M_FM_GPR_6x(t):=M_cx(t).
\end{align}
Then
$$
M_c=M_FM_GPR_6=\d_6[1,1,6,6,6,1].
$$
Since
$$
M_c^2=\d_6[1,1,1,1,1,1],
$$
one sees easily that after two steps the system stabilized to $\d_6^1$.

\item[(ii)]  Is it possible to stabilize (\ref{st.6.1}) to $\cal M=\{(1,0), (0,0)\}$?
To verify this, we first calculate the truth matrix corresponding to state set $\cal M$,
\begin{align}\label{st.6.12}
T_{\cal M}=\begin{bmatrix}
0&0&0&0&0&0\\
0&0&0&1&1&1\\
1&1&1&0&0&0\\
\end{bmatrix}.
\end{align}

where the maximum set $\Theta=\Delta_6$, and $\cal M \cap \Theta = \cal M$, therefore, the state set $\cal M$ is a control invariant set. From the truth table \ref{st.6.12}, one can easily sees that all states of the logic system can be stabilized to the state set $\cal M$ within one step, and the control sequence can be obtained as,

\begin{align}\label{st.6.13}
u(t)=\d_3[3,3,3,2,2,2]x(t)
\end{align}

Put this state feedback control into the original system, we have
the closed-loop system as
\begin{align}\label{st.6.14}
x(t+1)=M_FM_GPR_6x(t)=M_cx(t),
\end{align}
where
$$
M_c=M_FM_GPR_6=\d_6[3,3,6,6,6,3].
$$

Fig. \ref{Figst.6.1} is the state transition graph of the closed-loop network (\ref{st.6.14}). It shows the convergence of this system to $\cal M$.

\vskip 2mm

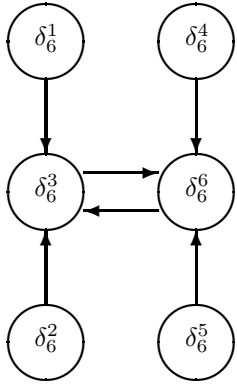
\begin{figure}
\centering
\setlength{\unitlength}{0.5cm}
\begin{picture}(8,12)\thicklines
\put(2,2){\oval(2,2)}
\put(6,2){\oval(2,2)}
\put(2,6){\oval(2,2)}
\put(6,6){\oval(2,2)}
\put(2,10){\oval(2,2)}
\put(6,10){\oval(2,2)}
\put(2,3){\vector(0,1){2}}
\put(6,3){\vector(0,1){2}}
\put(2,9){\vector(0,-1){2}}
\put(6,9){\vector(0,-1){2}}
\put(5,5.5){\vector(-1,0){2}}
\put(3,6.5){\vector(1,0){2}}
\put(1.7,1.9){$\d_{6}^{2}$}
\put(1.7,5.9){$\d_{6}^{3}$}
\put(1.7,9.9){$\d_{6}^1$}
\put(5.7,1.9){$\d_{6}^{5}$}
\put(5.7,5.9){$\d_{6}^{6}$}
\put(5.7,9.9){$\d_{6}^{4}$}
\end{picture}
\caption{State Transition Graph of (\ref{st.6.14})\label{Figst.6.1}}
\end{figure}

\vskip 2mm

\end{itemize}
\end{exa}

\section{Conclusion}

The Ledley antecedence/consequence solution has been applied for the design of feedback stabilizers in generic logic systems in this paper. To achieve such goal, the original theory has been extended in two ways. First, logical functions have been considered as an admissible set of values instead of a set of equations. Second, the domain of arguments has also been applied on a restricted subset of state space instead of the whole state space. Based on such knowledge, this paper has depicted that by properly designing admissible subsets for the logic functions of the systems, the state feedback controls can be obtained automatically when solving the subset antecedence solutions with restricted subsets for arguments. By using this approach, the stabilization of BCN to a pre-assigned point and to a pre-assigned set can be achieved respectively. An iterative algorithm has been designed to verify if the problem is solvable or not, and if the problem is solvable, the algorithm is also capable to provide all (time-optimal) state feedback stabilizers. Various examples have been demonstrated to verify the effectiveness of the proposed technique in Boolean control networks, as well as $k$-valued and mix-valued logical networks.

The technique introduced in this paper is new and useful. It shows that state feedback controls can be obtained via properly designed admissible sets and restricted sets. The method provided in this paper has a universal character, which is via properly designed sequence of subsets, the state feedback control may be obtained for various purposes (i.e., state feedback control, output feedback stabilization, tracking).

 \begin{IEEEbiography}[{\includegraphics[width=1in,height=1.25in,clip,keepaspectratio]{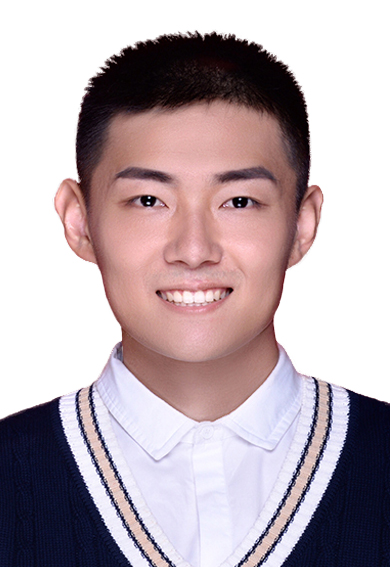}}]{Yingzhe Jia} (S'18) received the B.E. in electrical engineering from Shandong University, China, in 2015, and the Ph.D. degree in electrical and electronic engineering from the University of Manchester, Manchester, U.K. in 2020. He is currently a research associate with the school of Mathematics, Shandong University, China. His research interests include stabilization of logic control networks, application of game theory in power systems, etc.

\end{IEEEbiography}

\begin{IEEEbiography}[{\includegraphics[width=1in,height=1.25in,clip,keepaspectratio]{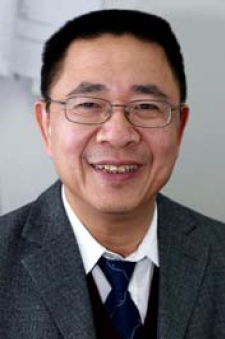}}]{Daizhan Cheng} (F'05) received the Bachelor¡¯s degree in mechanical engineering from Tsinghua University, Beijing, China, the M.S. degree in mathematics from the Graduate School, Chinese Academy of Sciences, Beijing, China, and the Ph.D. degree in systems science and mathematics from Washington University, St. Louis, WA, USA, in 1970, 1981, and 1985, respectively.

Since 1990 he has been a Professor both with the Institute of Systems Science, National University of Singapore, Singapore, and Academy of Mathematics and Systems Science, Chinese Academy of Sciences, Beijing, China. He has authored or coauthored 12 books, more than 240 journal papers, and more than 130 conference papers. His current research interests include nonlinear control systems, switched systems, Hamiltonian systems, Boolean control networks, and game theory.

Dr. Cheng was the recipient of the Second Grade National Natural Science Award of China in 2008 and 2014, and the Automatica 2008 to 2010 Best Theory/Methodology Paper Award, bestowed by IFAC, in 2011. He was the Chairman of the Technical Committee on Control Theory, Chinese Association of Automation from 2003 to 2010, and a Member of the IEEE CSS Board of Governors in 2009. He was an IFAC Fellow in 2008 and an IFAC Council Member from 2011 to 2014.
\end{IEEEbiography}

\begin{IEEEbiography}[{\includegraphics[width=1in,height=1.25in,clip,keepaspectratio]{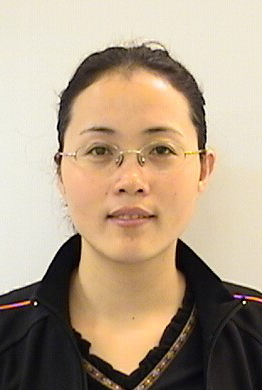}}]{Jun-e Feng} received the Ph.D. degree in cybernetics from Shandong University, Jinan, China, in 2003.

From 2006 to 2007, she was a Visiting Scholar with Massachusetts Institute of Technology, Cambridge, MA, USA, and a Visiting
Scholar with the University of Hong Kong, Hong Kong, in 2013. She is currently a Professor of School of Mathematics with Shandong University. Her research interests include singular systems, multiagent systems, logic control networks, etc
\end{IEEEbiography}

\end{document}